\newtheorem*{theorem*}{Theorem}
\title{A short proof of smooth implies flat}
\author{Jes\'us Conde--Lago}
\date{}
\address{Departamento de \'Alxebra, Facultade de Matem\'aticas, Universidade de Santiago de Compostela, E-15782 Santiago de Compostela, Spain}
\email{jesus.conde@usc.es}
\begin{document}
	\subjclass[2000]{14B25 (primary); 13B40 (secondary)}
	\keywords{Formal smoothness, flat algebra}
	\vspace{-5ex}
	\begin{abstract}
		Proofs that a smooth morphism is flat available in the literature are long and difficult. We give a short proof of this fact.
	\end{abstract}
	\vspace{5ex}
	
	\maketitle
	
	Let $(A,\mathfrak{m})\rightarrow (B,\mathfrak{n})$ be a local homomorphism of noetherian local rings. It is said \cite[$0_{IV}$.19.3.1]{EGA} that $B$ is formally smooth over $A$ (for the $\mathfrak{n}$-adic topology) if for any $A$-algebra $C$ and any nilpotent ideal $N$ of $C$, each $A$-algebra homomorphism $u: B\rightarrow C/N$ satisfying $u(\mathfrak{n}^k)=0$ for some $k$, factorizes as $B\overset{v}{\rightarrow} C \overset{p}{\rightarrow} C/N$ where $p$ is the canonical map.
	
	A fundamental result, proved by Grothendieck in \cite[$0_{IV}$.19.7.1]{EGA}, says that if $B$ is formally smooth over $A$ then $B$ is flat over $A$. His proof is long. In the following few years shorter proofs appeared, but based on results whose proofs were long.
	
	In many situations, one is concerned exclusively with \emph{smooth} algebras, i.e., formally smooth algebras essentially of finite type \cite[17.3.1, 17.1.2]{EGA} (which are then formally smooth for the discrete topology). In this case, a shorter proof of this result can be seen in \cite[\S7 n.10, Lemme 5]{Bo}, but it uses non-trivial results on smoothness and regularity. There are also two short papers deducing flatness from the Jacobian criterion of smoothness in some particular cases (for an extension of polynomial algebras over a field in \cite{MRW} and in characteristic zero in \cite{MR}).
	
	In this paper we deduce flatness from the very definition of smoothness in a short way.
	
	\begin{theorem*}
		Let $A$ be a noetherian ring and $B$ a smooth $A$-algebra. Then $B$ is flat as $A$-module.
	\end{theorem*}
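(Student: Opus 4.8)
The plan is to reduce to a local statement and then verify a flatness criterion using the lifting property directly. First I would use that flatness is local on $B$: it suffices to show that for every prime $\mathfrak q$ of $B$ lying over $\mathfrak p=\mathfrak q\cap A$, the localization $B_{\mathfrak q}$ is flat over $A_{\mathfrak p}$. Since smoothness is stable under localization, this reduces the theorem to the case of a formally smooth local homomorphism $(A,\mathfrak m)\to(B,\mathfrak n)$ of noetherian local rings with $B$ essentially of finite type (so that, by base change along $A\to k:=A/\mathfrak m$, the fibre $B/\mathfrak m B$ is formally smooth over $k$).

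Next I would invoke the local criterion of flatness: for such a local homomorphism, $B$ is $A$-flat if and only if $\operatorname{Tor}_1^A(k,B)=0$, and this is in turn equivalent to the assertion that for every $n\ge 0$ the multiplication map
$$\phi_n\colon (\mathfrak m^n/\mathfrak m^{n+1})\otimes_k (B/\mathfrak m B)\longrightarrow \mathfrak m^n B/\mathfrak m^{n+1}B$$
is injective. Each $\phi_n$ is visibly surjective, so the whole problem becomes showing these maps are isomorphisms; equivalently, that the canonical surjection of graded $k$-algebras $\operatorname{gr}_{\mathfrak m}(A)\otimes_k(B/\mathfrak m B)\to\operatorname{gr}_{\mathfrak m B}(B)$ is an isomorphism. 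Phrasing flatness this way is convenient because both sides are assembled from the nilpotent extensions $A/\mathfrak m^{n+1}\to A/\mathfrak m^n$, which is precisely the data that the definition of smoothness controls.

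To produce the required injectivity I would feed a trivial square-zero extension into the lifting property. For fixed $n$ set $R=B/\mathfrak m^n B$ and $M=(\mathfrak m^n/\mathfrak m^{n+1})\otimes_k(B/\mathfrak m B)$, form the trivial extension $E=R\ltimes M$ (so that $M^2=0$), regard it as an $A/\mathfrak m^{n+1}$-algebra through $A/\mathfrak m^{n+1}\to R\hookrightarrow E$, and note that the projection $E\to R$ is a surjection with nilpotent kernel $M$. Because $B$ is smooth, hence formally smooth for the discrete topology, the canonical $A$-algebra map $B\to R$ lifts to an $A$-algebra map $g\colon B\to E$; writing $g=(\,\overline{(\cdot)},\theta\,)$ exhibits an $A$-derivation $\theta\colon B\to M$. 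The intent is to combine $\theta$ with the square-zero structure of $\mathfrak m^n B/\mathfrak m^{n+1}B$ inside $B/\mathfrak m^{n+1}B$ to manufacture a $k$-linear retraction of $\phi_n$, which would force $\phi_n$ to be injective.

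The main obstacle is exactly this last step. The naive pairing tends to vanish, because an $A$-linear derivation cannot see the base direction $\mathfrak m^n/\mathfrak m^{n+1}$, and the extension $A/\mathfrak m^{n+1}\to A/\mathfrak m^n$ need not split $k$-linearly as algebras in mixed characteristic, so there is no section of the base to exploit. The real work is therefore to arrange the test extension $E$ (or a compatible family of them) so that the lift $g$ supplied by smoothness genuinely encodes the multiplication $\phi_n$ and yields an honest left inverse; carrying this out carefully, and checking the nilpotence hypotheses needed to apply the definition at each stage, is where the argument must be made precise.
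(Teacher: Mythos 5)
There is a genuine gap, and it is exactly where you say it is --- but the obstacle is not a matter of ``carrying it out carefully'': it is fatal to the strategy as designed. Write $R=B/\mathfrak m^nB$ and $M=(\mathfrak m^n/\mathfrak m^{n+1})\otimes_k(B/\mathfrak m B)$ as you do. Any lift $g\colon B\to E=R\ltimes M$ of the canonical map $B\to R$ has the form $g(b)=(\bar b,\theta(b))$ with $\theta\colon B\to M$ an $A$-derivation. Since $M$ is a $k$-vector space, $\mathfrak m M=0$, so for $a\in\mathfrak m$ and $b\in B$ one gets $\theta(ab)=a\theta(b)+b\theta(a)=0$; hence $\theta$ kills $\mathfrak m B$, in particular all of $\mathfrak m^nB$, and factors through $B/\mathfrak m B$. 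So the lift supplied by smoothness carries no information whatsoever about the subquotient $\mathfrak m^nB/\mathfrak m^{n+1}B$ that $\phi_n$ maps onto: applying the lifting property to square-zero extensions of $B$-algebras by modules over the closed fibre can only ever produce elements of $\operatorname{Der}_A(B,-)$, i.e.\ data of type $\Omega_{B/A}$, never the graded/Tor data the local criterion asks for. To extract flatness along these lines one must instead test against extensions built from the base (the truncations $A/\mathfrak m^{n+1}\to A/\mathfrak m^n$ and rings constructed from them) and control the resulting obstructions; that is precisely the long route of Grothendieck and Bourbaki that the theorem's short proof is meant to avoid. As it stands, your argument reduces the theorem to an unproved --- and, in the naive form you set up, false-in-spirit --- claim, so it is not a proof.

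The missing idea is to use smoothness to split a \emph{presentation} of $B$ rather than to verify a flatness criterion. Choose a surjection $\varphi\colon R\to B$ with $R$ a localization of a polynomial $A$-algebra (so $R$ is $A$-flat), let $I=\operatorname{ker}\varphi$, and pass to the $I$-adic completion $\hat R$, which is flat over the noetherian ring $R$, hence over $A$. Now apply the lifting property of $B$ not to auxiliary trivial extensions but to the tower of square-zero extensions $\hat R/\hat I^{\,i}\to\hat R/\hat I^{\,i-1}$: starting from $\operatorname{id}_B\colon B\to\hat R/\hat I=B$, one inductively constructs compatible $A$-algebra maps $f_i\colon B\to\hat R/\hat I^{\,i}$, and completeness gives in the limit an $A$-algebra homomorphism $f\colon B\to\hat R$ with $\hat\varphi f=\operatorname{id}_B$. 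Thus $B$ is a direct summand, as an $A$-module, of the flat $A$-module $\hat R$, hence flat --- with no localization, no local criterion, and no Tor computation at all.
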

	\begin{proof}
		Let $R$ be a localization of a polynomial $A$-algebra of finite type such that we have a surjective homomorphism $\varphi: R\rightarrow B$. Let $I=\operatorname{ker}\varphi$. Then $\hat{R}/\hat{I}=R/I=B$ and so $\hat{I}=\operatorname{ker}(\hat{\varphi}:\hat{R}\to B)$. As in \cite[$0_{\text{IV}}$.19.3.11]{EGA} we are going to see that we have a section $B\rightarrow \hat{R}$. By induction on $i\geq 2$, we construct $A$-algebra homomorphisms $f_i: B\rightarrow \hat{R}/\hat{I}^i$
		\begin{center}
			\begin{tikzpicture}[scale=2]
			\node (A) at (0.1,1) {$0$};
			\node (B) at (1,1) {$\hat{I}^{i-1}/\hat{I}^i$};
			\node (C) at (2,1) {$\hat{R}/\hat{I}^i$};
			\node (D) at (3,1) {$\hat{R}/\hat{I}^{i-1}$};
			\node (E) at (3.9,1) {$0$};
			\node (F) at (3,1.8) {$B$};
			\path[->,font=\scriptsize]
			(A) edge node[above]{} (B)
			(B) edge node[left]{} (C)
			(C) edge node[above]{} (D)
			(D) edge node[above]{} (E)
			(F) edge[densely dashed] node[left]{$f_i$} (C)
			(F) edge node[right]{$f_{i-1}$} (D);
			\end{tikzpicture}
		\end{center}
		making commutative the triangles. Since $\hat{R}$ is complete for the $\hat{I}$-adic topology, we have a limit homomorphism $$f=\varprojlim_{i} f_i: B \longrightarrow \varprojlim_{i} \hat{R}/\hat{I}^i=\hat{R}$$ such that $\hat{\varphi}f=\operatorname{id}_{B}$. Therefore $B$ is a direct summand of the flat $A$-module $\hat{R}$ and thus also flat.
	\end{proof}
	\bigskip

\end{document}